\theoremstyle{plain}
\newtheorem{thm}{Theorem}[section]
\newtheorem{lem}[thm]{Lemma}
\theoremstyle{definition} \theoremstyle{definition}
\theoremstyle{remark}
\def\R{{\mathbb R}}
\def\Q{{\mathbb Q}}
\def\Z{{\mathbb Z}}
\def\C{{\mathbb C}}
\def\N{{\mathbb N}}
\def\X{{\sf X}}
\def\Br{{\rm Br}}
\def\Hom{{\rm Hom}}
\def\cont{{\rm cont}}
\title{The Algebraic Brauer Group of a Reductive Group over a Nonarchimedean Local Field}
\author{Dylon Chow}
\begin{document}

\begin{abstract}
    We show that for nonarchimedean local fields $F$, the pairing from the algebraic part of the Brauer group of a reductive group $G$ characterizes all continuous homomorphisms from $G(F)$ into $\Q/\Z$. This generalizes results of Loughran and Loughran-Tanimoto-Takloo-Bighash.
\end{abstract}

\maketitle

\section*{Introduction}

Let $F$ be a non-Archimedean local field of characteristic 0 and $X$ an algebraic variety defined over $F$. The set $X(F)$ of $F$-rational points on $X$ acquires a natural analytic topology from $F$. Each element of the Brauer group $\text{Br}(X)$ of $X$ defines a locally constant map from $X(F)$ into $\Q/\Z$.

Let $X$ be a $F$-variety and let $\Br X$ denote the Brauer group of $X$. If $L$ is a $k$-algebra and $x \in X(L)$, then $x:\text{Spec} \ L \rightarrow X$ induces a homomorphism $\Br \ X \rightarrow \Br \ L$. By composition with the invariant map $\text{Br}(F) \rightarrow \Q/\Z$ of local class field theory, each element $x \in X(F)$ defines a homomorphism $\Br \ X \rightarrow \Q/\Z$. Similarly, each element of $\Br \ X$ defines a map $X(F) \rightarrow \Q/\Z$.

Let $F$ be any field and let $\overline{F}$ be a separable closure of $F$. Let $\text{Br}_1  X$ be the kernel of the homomorphism $\text{Br} \ X \rightarrow \text{Br} \ X_{\overline{F}}$, and let $\text{Br}_0  X$ denote the image of $\text{Br} \ F \rightarrow \text{Br} X$. The \textit{algebraic part} of $\Br (X)$ is defined to be the quotient $\text{Br}_a X=\text{Br}_1 X / \text{Br}_0  X$. For an algebraic group $G$ over $F$, the morphism $e:\text{Spec} \ F \rightarrow G$ associated with the identity element $e \in G(F)$ induces a homomorphism $\text{Br} \ G \rightarrow \text{Br} \ F$. Let $\text{Br}_e G$ be the intersection of $\text{Br}_1 G$ with the kernel of $\text{Br} \ G \rightarrow \text{Br} \ F$. The quotient homomorphism $\text{Br}_1 G \to \text{Br}_a G$ restricts to an isomorphism $\text{Br}_e G \cong \text{Br}_a G$. Elements of $\text{Br}_e G$ define continuous homomorphisms from $G(F)$ into $\text{Br}(F)$ \cite[Lemme 6.9]{Sansuc1981}.

We consider the algebraic part $\text{Br}_a (G)$ of the Brauer group of a connected reductive $F$-group $G$. To state our main result, let $G^{sc}$ be the simply connected cover of the derived group $G^{der}$ of $G$ and let \[\rho:G^{sc} \to G\] be the natural morphism $G^{sc} \to G^{der} \hookrightarrow G$. The main purpose of this paper is to prove the following: 

\begin{thm} Let $F$ be a non-archimedean local field of characteristic $0$. Let $G$ be a connected reductive group defined over $F$ and let $\rho:G^{sc} \to G$ be the natural map. The pairing \[\Br_e G \times G(F) \to \Q/\Z\] induces an isomorphism \[\Br_e G \cong  \Hom_\cont(G(F)/\rho(G^{sc}(F)),\Q/\Z).\] 
\end{thm}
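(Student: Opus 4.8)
The plan is to reduce to the case of a torus by means of a $z$-extension, and then to deduce the torus case from local Tate--Nakayama duality. First one checks that the pairing descends to the quotient: for $\beta\in\Br_e G$ the pullback $\rho^*\beta$ lies in $\Br_1 G^{sc}$, and since $\widehat{G^{sc}}=0$ and $\mathrm{Pic}\,\overline{G^{sc}}=0$ one has $\Br_a G^{sc}=0$, so $\rho^*\beta$ comes from $\Br F$; pulling back further along the identity point of $G^{sc}$ and using $\beta\in\ker(e_G^*)$ forces $\rho^*\beta=0$, whence $\beta$ annihilates $\rho(G^{sc}(F))$. Thus there is a natural homomorphism $\phi_G\colon\Br_e G\to\Hom_\cont(G(F)/\rho(G^{sc}(F)),\Q/\Z)$, and the task is to show $\phi_G$ is bijective.

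I would first treat the case $G=T$ a torus. Then $G^{sc}$ is trivial, $\mathrm{Pic}\,\overline{T}=0$, and Sansuc's computation of the algebraic Brauer group of a torus gives $\Br_e T\cong\Br_a T\cong H^2(F,\widehat{T})$; under this identification $\phi_T$ becomes the cup-product pairing with $T(F)=H^0(F,\overline{T})$ induced by the evaluation map $\widehat{T}\otimes\overline{T}\to\mathbb{G}_m$ and by $H^2(F,\mathbb{G}_m)=\Q/\Z$. That this pairing identifies $H^2(F,\widehat{T})$ with $\Hom_\cont(T(F),\Q/\Z)$ is (a form of) local Tate--Nakayama duality for tori; it can also be obtained by reducing to finite coefficients --- writing $H^2(F,\widehat{T})=H^1(F,\widehat{T}\otimes\Q/\Z)=\varinjlim_n H^1(F,\widehat{T}/n\widehat{T})$, applying Tate's duality at each finite level against $(\widehat{T}/n\widehat{T})^D=T[n]$, and passing to the limit, where $\varprojlim_n T(F)/nT(F)=T(F)^\wedge$. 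This settles the theorem for tori.

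For a general connected reductive $G$ I would choose a $z$-extension $1\to S\to\widetilde G\xrightarrow{\pi}G\to 1$ with $S$ a quasi-trivial torus (so $H^1(F,S)=0$) and $\widetilde G^{der}=\widetilde G^{sc}=G^{sc}$ simply connected, and put $\widetilde T=\widetilde G^{ab}$. On the group side, $\pi$ is surjective on $F$-points with kernel $S(F)$, and since $\rho$ factors through $G^{sc}=\widetilde G^{der}\hookrightarrow\widetilde G\xrightarrow{\pi}G$ one obtains $G(F)/\rho(G^{sc}(F))\cong\widetilde G(F)/(S(F)\cdot\widetilde G^{der}(F))$; Kneser's theorem $H^1(F,G^{sc})=1$ gives $\widetilde G(F)/\widetilde G^{der}(F)\cong\widetilde T(F)$, so $G(F)/\rho(G^{sc}(F))\cong\widetilde T(F)/\overline{S(F)}$, where $\overline{S(F)}$ denotes the image of $S(F)$ in $\widetilde T(F)$. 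On the Brauer side, $\Br_e\widetilde G\cong\Br_e\widetilde T$ by pullback along $\widetilde G\to\widetilde T$ (using $\mathrm{Pic}\,\overline{\widetilde G}=0$ and $\widehat{\widetilde G}=\widehat{\widetilde T}$), and because $S\hookrightarrow\widetilde G\xrightarrow{\pi}G$ is the constant map $e_G$, the composite $\Br_e G\xrightarrow{\pi^*}\Br_e\widetilde G\to\Br_e S$ vanishes; the crucial point, to be extracted from Sansuc's functorial exact sequences for the central extension --- equivalently from the low-degree terms of the Hochschild--Serre spectral sequence for the $S$-torsor $\widetilde G\to G$, using $\mathrm{Pic}\,\overline{\widetilde G}=\mathrm{Pic}\,\overline{S}=0$ --- is that $\pi^*$ is injective with image exactly $\ker(\Br_e\widetilde G\to\Br_e S)$.

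Putting these together and applying the torus case to $\widetilde T$ and to $S$: under $\Br_e\widetilde T\cong\Hom_\cont(\widetilde T(F),\Q/\Z)$ and $\Br_e S\cong\Hom_\cont(S(F),\Q/\Z)$, the map $\Br_e\widetilde T\to\Br_e S$ becomes restriction of continuous characters along $S(F)\to\widetilde T(F)$, so $\Br_e G\cong\{\chi\in\Hom_\cont(\widetilde T(F),\Q/\Z):\chi|_{\overline{S(F)}}=0\}=\Hom_\cont(\widetilde T(F)/\overline{S(F)},\Q/\Z)\cong\Hom_\cont(G(F)/\rho(G^{sc}(F)),\Q/\Z)$; and tracing $\phi_G$ through these isomorphisms exhibits it as the tautological pairing between the locally compact abelian group $\widetilde T(F)/\overline{S(F)}$ and its Pontryagin dual, which is perfect. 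I expect the main obstacle to be precisely the Brauer-side input of the reduction --- showing that $\pi^*$ has image $\ker(\Br_e\widetilde G\to\Br_e S)$ and that each of the identifications above is compatible with the Brauer pairing --- since this demands a careful analysis of the spectral-sequence edge maps attached to the torsor $\widetilde G\to G$; the torus case is classical but is the other indispensable ingredient.
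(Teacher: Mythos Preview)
Your proposal is correct and follows essentially the same route as the paper: the torus case via local duality (which the paper cites rather than proves), followed by reduction to tori through a $z$-extension, using Sansuc's functorial exact sequences on the Brauer side and Kneser's vanishing $H^1(F,G^{sc})=1$ on the group side. The paper separates the reduction into two stages---first groups with $G^{der}=G^{sc}$, then general $G$---while you merge these into a single pass through the $z$-extension, but the ingredients and the resulting diagram-chase are the same.
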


This generalizes a theorem of Loughran \cite{Loughran2018}, who proved it for tori, and Loughran, Takloo-Bighash, and Tanimoto \cite{LTBT2020}, who proved it for semisimple groups. We will address the situation of a number field in future work.

\section{Notation and conventions}

\subsection{\ } We use $F$ to denote a field. Let $\overline{F}$ be an algebraic closure of $F$ and write $F^s$ for the separable closure of $F$ in $\overline{F}$. We let $\Gamma=\Gamma_F$ denote the Galois group of $F^s$ over $F$; it is a profinite topological group with the Krull topology.

\subsection{\ } If $G$ is a connected reductive group defined over a field $F$ and $K$ is a field extension of $F$, we write $G_K$ for the $K$-group obtained from $G$ by extension of scalars. Let $\mathbb{G}_m$ be the multiplicative group scheme $\text{GL}_1$. For an algebraic group $G$, let $\X(G)$ denote the group of characters of $G$, i.e. the group of algebraic group homomorphisms $G \rightarrow \mathbb{G}_m$. We let $\X^*(G)=\X(G_{F^s})$. In other words, $\X^*(G)$ consists of the characters of $G$ defined over $F^s$. The group $\Gamma$ acts continuously on $\X^*(G)$.

\subsection{\ } If $A$ is an abelian group with the discrete topology on which a profinite group $\Gamma$ acts as a group of automorphisms, then $A$ is called a $\Gamma$-module if the action map $\Gamma \times A \rightarrow A, (\sigma, a) \mapsto \sigma a$ is continuous. Equivalently, $A$ is a $\Gamma$-module if for all $a \in A$ the stabilizer $\{\sigma \in \Gamma|\sigma a = a\}$ of $a$ is open in $\Gamma$.

\subsection{\ } Let $k$ be a field, $k_s$ a separable closure of $k$ and $G$ an algebraic $k$-group. Then $H^i(k,H)$ denotes the $i$-th cohomology set of the Galois group $\text{Gal}(k_s/k)$ of $k_s$ over $k$, with coefficients in $H(k_s)$ $(i=0,1)$ and, if $G$ is commutative, the $i$-th cohomology group of $\text{Gal}(k_s/k)$ in $G(k_s)$ for all $i \in \N$.

\subsection{\ } If $F'$ is a finite field extension of $F$ and $G$ is an algebraic group over $F'$, the Weil restriction of $G$ is the algebraic group $G_{F'/k}$ over $k$ such that for all $k$-algebras $R$, $G_{F'/F}(R)=G(F' \otimes R)$. By an induced $\Gamma$-module we mean a $\Gamma$-module that has a finite $\Gamma$-stable $\mathbb{Z}$-basis. We say than an $F$-torus $T$ is induced if $\X^*(T)$ is an induced $\Gamma$-module. Equivalently, an $F$-torus $T$ is induced if it is a finite product of tori of the form $(\mathbb{G}_m)_{k'/F}$ with $k'$ a finite separable extension of $F$.

\subsection{\ } As usual, $\Q, \R,$ and $\C$ will denote respectively the fields of rational, real, and complex numbers; $\Z$ denotes the ring of rational integers.

\subsection{\ } Sometimes our characters have values in $\Q/\Z$, in which case we use the exponential mapping $x \mapsto \text{exp}(2\pi i x)$ from $\Q/\Z$ to $\C^\times$ to view them as complex-valued characters.

\subsection{\ } Let $G_{der}$ denote the derived group of $G$, $G_{sc}$ the simply connected cover of $G_{der}$, and $G_{ad}$ the adjoint group of $G$, i.e., $G_{ad}=G/Z_G$ where $Z_G$ is the center of $G$. Let $\rho:G^{sc} \to G$ be the natural morphism. Given a maximal $F$-torus $T$ of $G$, let $T_{sc}=\rho^{-1}(T)$.

\section{Preliminaries}

\subsection{\ } Let $F$ be a local field or a number field and let $X$ be a $F$-variety. Let $\Br X$ be the Brauer group of $X$. If $L$ is a $k$-algebra and $x \in X(L)$, then $x:\text{Spec} \ L \rightarrow X$ induces a homomorphism $\Br \ X \rightarrow \Br \ L$. By composition with the invariant map $\text{Br}(F) \rightarrow \Q/\Z$ of local class field theory, each element $x \in X(F)$ defines a homomorphism $\Br \ X \rightarrow \Q/\Z$. Similarly, each element of $\Br \ X$ defines a map $X(F) \rightarrow \Q/\Z$.

\subsection{\ } Let $F$ be any field and let $\overline{F}$ be a separable closure of $F$. Let $\text{Br}_1  X$ be the kernel of the homomorphism $\text{Br} \ X \rightarrow \text{Br} \ X_{\overline{F}}$, and let $\text{Br}_0  X$ denote the image of $\text{Br} \ F \rightarrow \text{Br} X$. The \textit{algebraic part} of $\Br (X)$ is defined to be the quotient $\text{Br}_a X=\text{Br}_1 X / \text{Br}_0  X$. For an algebraic group $G$ over $F$, the morphism $e:\text{Spec} \ F \rightarrow G$ associated with the identity element $e \in G(F)$ induces a homomorphism $\text{Br} \ G \rightarrow \text{Br} \ F$. Let $\text{Br}_e G$ be the intersection of $\text{Br}_1 G$ with the kernel of $\text{Br} \ G \rightarrow \text{Br} \ F$. The quotient homomorphism $\text{Br}_1 G \to \text{Br}_a G$ restricts to an isomorphism $\text{Br}_e G \cong \text{Br}_a G$. Elements of $\text{Br}_e G$ define continuous homomorphisms from $G(F)$ into $\text{Br}(F)$ \cite[Lemme 6.9]{Sansuc1981}.

\subsection{\ } A morphism $f:T \rightarrow U$ of tori is defined over $F$ is a crossed module in a natural way and we can consider its cohomology groups $H^i(F,T \rightarrow U)$. We refer the reader to \cite{Borovoi1998} for the definition and properties of crossed modules and their cohomology.

\section{Non-archimedean local fields}

We prove the theorem in two stages. In the first stage we start from the case of tori and generalize the result for only those $G$ whose derived group is simply connected. 

\subsection{Tori}

\begin{lem}
Let $T$ be a torus over a local field $F$ of characteristic $0$. The bilinear pairing \[\text{Br}_e T \times T(F) \to \text{Br} \ F \subset \Q/\Z\] is perfect, i.e., the induced map \[\text{Br}_e T \to \text{Hom}(T(F),\Q/\Z)\] is an isomorphism of abelian groups.
\end{lem}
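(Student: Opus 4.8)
The plan is to pass to Galois cohomology on both sides; this case of the statement is due to Loughran \cite{Loughran2018}, and the natural route runs as follows. On the left, since $T$ is a torus one has $\mathrm{Pic}(T_{\overline{F}})=0$, $\Br(T_{\overline{F}})=0$, and $\overline{F}[T]^{\times}=\overline{F}^{\times}\oplus\X^{*}(T)$ by Rosenlicht's unit theorem; feeding these into the Hochschild--Serre spectral sequence $H^{p}(\Gamma,H^{q}(T_{\overline{F}},\mathbb{G}_{m}))\Rightarrow H^{p+q}(T,\mathbb{G}_{m})$ makes it degenerate enough to give a canonical isomorphism $\Br_{e}T\cong H^{2}(F,\X^{*}(T))$ (this is \cite{Sansuc1981}). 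On the right, set $\X_{*}(T):=\Hom_{\Z}(\X^{*}(T),\Z)$, so that $T(\overline{F})=\X_{*}(T)\otimes\overline{F}^{\times}$ and $T(F)=H^{0}(F,T)$; cup product followed by the evaluation $\X^{*}(T)\otimes\X_{*}(T)\to\Z$ yields a pairing $H^{2}(F,\X^{*}(T))\times H^{0}(F,T)\to H^{2}(F,\overline{F}^{\times})=\Br(F)\xrightarrow{\ \mathrm{inv}\ }\Q/\Z$. So the lemma amounts to the statement that this cup-product pairing identifies $H^{2}(F,\X^{*}(T))$ with $\Hom_{\cont}(T(F),\Q/\Z)$; continuity is automatic for the characters arising from $\Br_{e}T$, and it is genuinely required since $T(F)$ is not compact.

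The first thing to verify is that the geometric pairing $(b,t)\mapsto t^{*}b$ coincides (up to sign) with the cup-product pairing just written down, and I expect this compatibility to be the main point needing care. The clean route is functoriality: both pairings are contravariant in $T$ for morphisms of tori, so choosing a quasi-trivial torus $Q$ together with a closed immersion $j\colon T\hookrightarrow Q$ --- obtained by dualizing a surjection of a permutation $\Gamma$-module onto $\X^{*}(T)$ --- and using that $H^{3}(F,-)$ vanishes on $\Gamma$-lattices (strict cohomological dimension $2$), one sees that $j^{*}\colon\Br_{e}Q\to\Br_{e}T$ is surjective; hence the compatibility for $T$ follows from the one for $Q$. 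Writing $Q$ as a product of Weil restrictions $(\mathbb{G}_{m})_{L/F}$ and applying Shapiro's lemma reduces it to $T=\mathbb{G}_{m}$ over a finite extension $L$, where $H^{2}(L,\Z)\cong H^{1}(L,\Q/\Z)$ parametrizes cyclic algebras: the class attached to $\psi\in H^{1}(L,\Q/\Z)$ and the coordinate function is the cyclic algebra $(\psi,\cdot)$, whose fibre at $t_{0}\in L^{\times}$ has invariant $\psi(\mathrm{rec}_{L}(t_{0}))$. That is exactly the cup-product recipe read through local class field theory.

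With the pairing so identified, the lemma is Tate--Nakayama (Tate local) duality for tori over a non-archimedean local field of characteristic $0$, which I would cite. One can also extract it from the material above: for quasi-trivial $T$ it is Shapiro plus local class field theory (together with $H^{1}(F,T)=0$ by Hilbert 90), and in general one takes $1\to T\to Q\to R\to 1$ with $Q$ quasi-trivial and feeds the resulting four-term exact sequences --- of $F$-points on one side, of $\X^{*}$-cohomology on the other, both short because $H^{1}(F,Q)=0$ and $H^{1}(F,\X^{*}(Q))=0$ --- into the five lemma. Here two things deserve attention beyond routine bookkeeping: the terms $H^{1}(F,T)$ and $H^{1}(F,\X^{*}(T))$ that appear are finite and matched by the $r=1$ case of the duality, so the $r=0$ and $r=1$ statements must be handled together; and $T(F),Q(F),R(F)$ are locally compact abelian groups of the shape (finitely generated free)$\,\times\,$(profinite), on which $\Hom_{\cont}(-,\Q/\Z)$ is exact on the sequences in play. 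Granting Step 1 and the cyclic-algebra computation, the remainder is formal.
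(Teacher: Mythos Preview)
The paper does not actually prove this lemma; its entire proof is the single sentence ``See \cite[Theorem~4.3]{Loughran2018}.'' Your proposal goes well beyond that, sketching the content behind the citation: identify $\Br_e T\cong H^2(F,\X^*(T))$ via Hochschild--Serre/Sansuc, check that the Brauer pairing matches the cup product by reducing through a quasi-trivial envelope to $\mathbb{G}_m$ (where it becomes the cyclic-algebra formula read through local reciprocity), and then invoke Tate--Nakayama duality for tori. That is correct and is, in outline, what Loughran does; so the only ``comparison'' is that the paper cites while you unpack.

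One caveat on your optional last paragraph. In the d\'evissage $1\to T\to Q\to R\to 1$ with $Q$ quasi-trivial, knowing the isomorphisms at the $Q$-spot and at the $r=1$ spot for $T$ gives you only that the comparison map for $T$ is surjective and the one for $R$ is injective; the remaining half of each is equivalent to the other, so the four/five lemma alone does not close the loop. One repairs this by a genuine simultaneous treatment of $r=0$ and $r=1$ in Tate cohomology (dimension shifting), or---as you yourself propose first---by citing Tate--Nakayama directly. Since your primary route is the citation, this is a wrinkle in the supplementary sketch rather than a gap in the proposal. You are also right to insist on $\Hom_{\cont}$; the paper's statement of the lemma omits it, though its main theorem does not.
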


\begin{proof}
See \cite[Theorem 4.3]{Loughran2018}.
\end{proof}

\subsection{Groups with simply connected derived group}

Let $F$ be a $p$-adic field. Now assume that $G$ is such that $G^{der}=G^{sc}$. Define $T=G/G^{der}$. We have an exact sequence \[1 \to G^{der} \to G \to T \to 1.\] We get an exact sequence \[1 \to G^{der}(F) \to G(F) \to T(F) \to 1,\] and thus an isomorphism \[G(F)/j(G^{der}(F)) \cong T(F).\]

Since $\text{Pic}(\overline{G})=0$, we have (\cite[Lemme 6.9 (i)]{Sansuc1981}) canonical isomorphisms $H^2(F,\X^*(G)) \cong \Br_a G$ and $H^2(F,\X^*(T)) \cong \Br_a T$. The projection $G \to T$ yields a commutative diagram 

\begin{tikzcd}
\Br_a T \arrow{r}\arrow{d} 
& \Br_a G \arrow{d} \\
H^2(F,\X^*(T)) \arrow{r}& H^2(F,\X^*(G)).
\end{tikzcd}

The vertical arrows are isomorphisms. The first row is part of a long exact sequence coming from the exact sequence $1 \to G^{sc} \to G \to T \to 1$: (\cite[Corollaire 6.11]{Sansuc1981}) \[...\to Pic(G^{sc}) \to \Br_a T \to \Br_a G \to \Br_a G^{sc}.\] Since $Pic(G^{sc})=1$ and $\Br_a(G^{sc})=1$ (\cite[Lemme 9.4 (iv)]{Sansuc1981}), the horizontal arrow is an isomorphism. We get a commutative diagram 

\begin{tikzcd}
H^2(F,\X^*(T)) \arrow{r}\arrow{d}
& H^2(F,\X^*(G)) \arrow{d} \\
Hom(G(F)/j(G^{der}(F)),\Q/\Z) \arrow{r}& Hom(T(F),\Q/\Z).
\end{tikzcd}

This proves the result for groups whose derived group is simply connected.

\subsection{General reductive groups}

In the second stage we use the following result, which allows one to reduce to the case where the derived group is simply connected.

\begin{lem}
For any connected reductive $F$-group $G$ split by $K$, there exists an extension \[1 \to Z \to \widetilde{G} \to G \to 1\] such that

\begin{itemize}
    \item $Z$ is a central torus in $\widetilde{G}$,
    
    \item $Z$ is obtained from Weil restriction of scalars from a split $K$-torus, and
    
    \item $\widetilde{G}^{der}$ is simply connected.
\end{itemize}
\end{lem}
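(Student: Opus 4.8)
The statement is the existence of a \emph{$z$-extension} of $G$, and the plan is to carry out the standard pushout construction, taking care to arrange the central torus to be induced and split by $K$. Write $\mu=\ker(\rho\colon G^{sc}\to G)$. Since $\rho$ factors through the closed immersion $G^{der}\hookrightarrow G$, this $\mu$ is the kernel of the central isogeny $G^{sc}\to G^{der}$, hence a finite central $F$-subgroup scheme of $G^{sc}$ of multiplicative type; as $\mathrm{char}\,F=0$ it is a finite \'etale group. Because $G$, hence $G^{sc}$, hence $\mu$, is split by $K$, the group $\X^*(\mu)$ is a finite abelian group on which $\Gamma$ acts through $\Gamma_{K/F}:=\mathrm{Gal}(K/F)$.

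The first step is to embed $\mu$ into an induced torus that is split by $K$. Since $\X^*(\mu)$ is a finite, hence finitely generated, $\Z[\Gamma_{K/F}]$-module, it is a quotient of a free one: fix a surjection of $\Gamma$-modules $\Z[\Gamma_{K/F}]^{\,n}\twoheadrightarrow\X^*(\mu)$. Let $Z$ be the $F$-torus with $\X^*(Z)=\Z[\Gamma_{K/F}]^{\,n}$. Since $\X^*((\mathbb{G}_m)_{K/F})\cong\Z[\Gamma_{K/F}]$ as $\Gamma$-modules, $Z\cong(\mathbb{G}_m^{\,n})_{K/F}$ is the Weil restriction of the split $K$-torus $\mathbb{G}_m^{\,n}$, and in particular an induced torus. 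Under the anti-equivalence between $F$-groups of multiplicative type and finitely generated $\Gamma$-modules, the chosen surjection of character modules corresponds to a closed $F$-immersion $\iota\colon\mu\hookrightarrow Z$.

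Next I would form the quotient $\widetilde{G}:=(G^{sc}\times Z)/\bar\mu$, where $\bar\mu=\{(c,\iota(c)^{-1}):c\in\mu\}$ is the anti-diagonal copy of $\mu$; it is finite, defined over $F$, and central since $\bar\mu\subset Z(G^{sc})\times Z$, so $\widetilde{G}$ is a connected reductive $F$-group. Then one checks the three properties. (i) The homomorphism $G^{sc}\times Z\to G$, $(g,z)\mapsto\rho(g)$, kills $\bar\mu$ and so descends to a surjection $\widetilde{G}\to G$ whose kernel is $(\mu\times Z)/\bar\mu$; the map $(c,z)\mapsto\iota(c)z$ identifies this kernel with $Z$, and it is the image of $1\times Z$, hence central in $\widetilde{G}$. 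Thus $1\to Z\to\widetilde{G}\to G\to1$ is exact with $Z$ central of the desired form. (ii) As $Z$ is a central torus, $\widetilde{G}^{der}$ is the image of $(G^{sc}\times Z)^{der}=G^{sc}\times1$, and $G^{sc}\to\widetilde{G}$, $g\mapsto[(g,1)]$, is injective because $(g,1)\in\bar\mu$ forces $\iota(g)=1$, i.e.\ $g=1$. Hence $\widetilde{G}^{der}\cong G^{sc}$ is simply connected.

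I do not expect a serious obstacle: this is the classical $z$-extension of Langlands and Kottwitz, and the write-up is mostly bookkeeping. The points that need genuine care are that $\mu$ is of multiplicative type and split by $K$ (structure theory of reductive groups over a field, using $\mathrm{char}\,F=0$); the passage between the surjection $\X^*(Z)\twoheadrightarrow\X^*(\mu)$ and the closed immersion $\iota$ via the anti-equivalence of categories; and the two short computations in (i) and (ii) identifying $(\mu\times Z)/\bar\mu$ with $Z$ (as the image of $1\times Z$) and $\widetilde{G}^{der}$ with the image of $G^{sc}$. The only real design choice is to take $Z=(\mathbb{G}_m^{\,n})_{K/F}$ rather than a minimal induced torus, which is what makes ``Weil restriction from a \emph{split} $K$-torus'' hold on the nose.
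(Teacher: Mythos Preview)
The paper does not actually prove this lemma; it merely records the statement, names it a $z$-extension, and proceeds. So there is no ``paper's proof'' to compare against, and your proposal would supply an argument where the paper has none.

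That said, your construction has a genuine gap. You define $\widetilde{G}=(G^{sc}\times Z)/\bar\mu$ and then claim that $(g,z)\mapsto\rho(g)$ descends to a \emph{surjection} $\widetilde{G}\to G$. But $\rho\colon G^{sc}\to G$ has image $G^{der}$, so the image of your map is $G^{der}$, not $G$; the sequence $1\to Z\to\widetilde{G}\to G\to1$ is exact only when $G$ is semisimple. In other words, what you have built is a $z$-extension of $G^{der}$, not of $G$. (Everything else in your write-up --- the embedding of $\mu$ into an induced $K$-split torus via the character-module surjection, the identification of the kernel with $Z$, and the computation $\widetilde{G}^{der}\cong G^{sc}$ --- is correct for that smaller target.)

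The standard repair is to bring the radical of $G$ into the picture. One clean way: form $G'=G\times_{G^{ad}}G^{sc}$, so that $G'\to G$ is surjective with finite central kernel $Z(G^{sc})$ and $(G')^{der}\cong G^{sc}$; then embed $Z(G^{sc})$ (rather than just $\mu$) into an induced torus $Z\cong(\mathbb{G}_m^{\,n})_{K/F}$ exactly as you did, and set $\widetilde{G}=(G'\times Z)/\overline{Z(G^{sc})}$. Now the map to $G$ is honestly surjective with kernel $Z$, and your injectivity argument for $G^{sc}\to\widetilde{G}$ still gives $\widetilde{G}^{der}\cong G^{sc}$. Alternatively, push out your extension of $G^{der}$ along $G^{der}\hookrightarrow G$ using $G=(G^{der}\times R)/(G^{der}\cap R)$ with $R$ the radical; either route closes the gap.
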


Such an extension is called a $z$-extension. We proceed with the proof of the general case. A similar result appears in \cite[Lemma A.1, Appendix]{LM2015}. Consider a $z$-extension as above. We get two more exact sequences. First, since $\text{Pic} Z=0$, we get from \cite[Corollary 6.11]{Sansuc1981} an exact sequence of abelian groups \[1 \to \Br_e G \to \Br_e \widetilde{G} \to \Br_e Z.\] Since $Z$ is an induced torus, $H^1(F,Z)=0$, and so we get another exact sequence \[1 \to Z(F) \to \widetilde{G}(F) \to G(F) \to 1.\]

Applying $\Hom(-,\Q/\Z)$ we get an exact sequence \[1 \to \Hom(G(F),\Q/\Z) \to \Hom(\widetilde{G}(F),\Q/\Z) \to \Hom(Z(F),\Q/\Z) \to 1.\] This induces an exact sequence \[1 \to \Hom(G(F)/G^{der}(F),\Q/\Z) \to \Hom(\widetilde{G}(F)/\widetilde{G}^{der}(F),\Q/\Z) \to \Hom(Z(F),\Q/\Z).\]

We get the following commutative diagram with exact rows:

\begin{tikzcd}
1 \arrow{r}
& \Br_e G \arrow{r}
& \Br_e \widetilde{G} \arrow{r}\arrow{d}
& \Br_e Z \arrow{d}\\
1 \arrow{r}&\Hom(G(F)/G^{der}(F),\Q/\Z)\arrow{r}&\Hom(\widetilde{G}(F)/\widetilde{G}^{der}(F),\Q/\Z)\arrow{r}&\Hom(Z(F),\Q/\Z)
\end{tikzcd}

The two vertical arrows are the isomorphisms constructed above. We define a homomorphism $\Br_e G \to \Hom(G(F)/G^{der}(F),\Q/\Z)$ to be the unique homomorphism that makes the diagram commute -- it is an isomorphism, which can be seen to be induced from the Brauer pairing. This completes the proof.

\section*{Acknowledgements}

The author thanks Dan Loughran and Ramin Takloo-Bighash for helpful comments.

\bibliographystyle{alpha}
\bibliography{biblio.bib}
\end{document}